\documentclass[a4paper]{amsart}
\usepackage{hyperref}
\usepackage{amsmath}
\usepackage{amsthm}
\usepackage[backrefs,msc-links]{amsrefs}
\usepackage{amsfonts}
\IfFileExists{cyr.fd}{\input{cyr.fd}}{}
\newtheorem{thm}{Theorem}
\newtheorem{prop}[thm]{Proposition}
\newtheorem{cor}[thm]{Corollary}

\theoremstyle{definition}
\newtheorem{defn}[thm]{Definition}
\newtheorem{example}[thm]{Example}

\theoremstyle{remark}
\newtheorem{rem}[thm]{Remark}

\providecommand{\Space}[3][]{\ensuremath{\mathbb{#2}^{#3}_{#1}{}}}
\providecommand{\algebra}[1]{\ensuremath{\mathfrak{#1}}}

\providecommand{\linv}[2][\relax]{\mathfrak{L}^{#2}_{#1}}

  \providecommand{\FSpace}[3][]{\ensuremath{\ifx#2l \ell_{#3}^{#1}{}
      \else  #2_{#3}^{#1}{}\fi}} 
\providecommand{\SL}[1][2]{\ensuremath{\FSpace{SL}{#1}(\Space{R}{})}}
\providecommand{\norm}[2][\relax]{\left\|#2\right\|\ifx#1\relax\else_{#1}\fi}
\providecommand{\modulus}[2][\relax]{\left| #2 \right|\ifx#1\relax\else_{#1}\fi}
\providecommand{\scalar}[3][\relax]{\left\langle #2,#3 
        \right\rangle\ifx#1\relax\else_{#1}\fi}
\providecommand{\eqref}[1]{\textup{(\ref{#1})}}
\providecommand{\uir}[3][0]{\ifcase #1{\rho^{#2}_{#3}}%
\or {\breve{\rho}^{#2}_{#3}}%
\or {\tilde{\rho}^{#2}_{#3}}\fi}
\providecommand{\oper}[1]{\mathcal{#1}}
\providecommand{\rmi}{\mathrm{i}}

\providecommand{\ead}[1]{\email{#1}}
\begin{document}
\title{Covariant Transform}

\author
{\href{http://amsta.leeds.ac.uk/~kisilv/}{Vladimir V. Kisil}}

\address{%
School of Mathematics, 
University of Leeds, 
Leeds LS2\,9JT, 
UK\\
(On leave from the Odessa University)
}
\ead{\href{mailto:kisilv@maths.leeds.ac.uk}{\texttt{kisilv@maths.leeds.ac.uk}}}

\maketitle

\centerline{\emph{Dedicated to the memory of Cora Sadosky}}

\begin{abstract} 
  The paper develops theory of covariant transform, which is inspired
  by the wavelet construction. It was observed that many interesting
  types of \emph{wavelets} (or \emph{coherent states}) arise from
  group representations which \emph{are not} square integrable or
  vacuum vectors which \emph{are not} admissible. Covariant transform
  extends an applicability of the popular wavelets construction to
  classic examples like the Hardy space \(\FSpace{H}{2}\), Banach
  spaces, covariant functional calculus and many others.

  \textbf{Keywords:} Wavelets, coherent states, group representations, Hardy
  space, Littlewood--Paley operator, functional calculus, Berezin calculus, Radon transform,
  M\"obius map, maximal function, affine group, special linear group,
  numerical range, characteristic function, functional model.
\end{abstract}

\tableofcontents

A general group-theoretical
construction~\cites{Perelomov86,FeichGroech89a,Kisil98a,AliAntGaz00,%
Fuhr05a,ChristensenOlafsson09a,KlaSkag85}
of \emph{wavelets} (or \emph{coherent states}) starts from an
irreducible square integrable  representation---in the
proper sense or modulo a subgroup.  Then a mother wavelet is chosen to
be admissible. This leads to a wavelet transform which is an
isometry to \(\FSpace{L}{2}\) space with respect to the Haar measure
on the group or (quasi)invariant measure on a homogeneous space.

The importance of the above situation shall not be diminished, however
an exclusive restriction to such a setup is not necessary, in fact.
Here is a classical example from complex analysis: the Hardy space
\(\FSpace{H}{2}(\Space{T}{})\) on the unit circle and Bergman spaces
\(\FSpace[n]{B}{2}(\Space{D}{})\) in the unit disk produce wavelets
associated with representations \(\rho_1\) and \(\rho_n\) of the group
\(\SL\) respectively~\cite{Kisil97c}. While representations \(\rho_n\)
are from square integrable discrete series, the mock discrete series
representation \(\rho_1\) is not square
integrable~\citelist{\cite{Lang85}*{\S~VI.5}
  \cite{MTaylor86}*{\S~8.4}}. However it would be natural to treat the
Hardy space in the same framework as Bergman ones. Some more examples
will be presented below.

\section{Covariant Transform}
\label{sec:wavelet-transform}

To make a sharp but still natural generalisation of wavelets we give the following
definition.
\begin{defn}\cite{Kisil09d}
  Let \(\uir{}{}\) be a representation of
  a group \(G\) in a space \(V\) and \(F\) be an operator from \(V\) to a space
  \(U\). We define a \emph{covariant transform}
  \(\oper{W}\) from \(V\) to the space \(\FSpace{L}{}(G,U)\) of
  \(U\)-valued functions on \(G\) by the formula:
  \begin{equation}
    \label{eq:coheret-transf-gen}
    \oper{W}: v\mapsto \hat{v}(g) = F(\uir{}{}(g^{-1}) v), \qquad
    v\in V,\ g\in G.
  \end{equation}
  Operator \(F\) will be called \emph{fiducial operator} in this context.
\end{defn}
We borrow the name for operator \(F\) from fiducial vectors of
Klauder and Skagerstam~\cite{KlaSkag85}. 
\begin{rem}
  We do not require that fiducial operator \(F\) shall be linear.
  Sometimes the homogeneity, i.e. \(F(t v)=tF(v)\) for \(t>0\), alone
  can be already sufficient, see Example~\ref{ex:maximal-function}.
\end{rem}
\begin{rem}
  \label{re:range-dim} 
  Usefulness of the covariant transform is in the reverse proportion
  to the dimensionality of the space \(U\). The covariant transform
  encodes properties of \(v\) in a function \(\oper{W}v\) on \(G\).
  For a low dimensional \(U\) this function can be ultimately
  investigated by means of harmonic analysis. Thus \(\dim U=1\)
  (scalar-valued functions) is the ideal case, however, it is
  unattainable sometimes, see Example~\ref{it:direct-funct} below. We
  may have to use a higher dimensions of \(U\) if the given group
  \(G\) is not rich enough.
\end{rem}
As we will see below covariant transform is a close relative of
wavelet transform.  The name is chosen due to the following common
property of both transformations.
\begin{thm} 
  \label{pr:inter1} 
  The covariant transform~\eqref{eq:coheret-transf-gen}
  intertwines \(\uir{}{}\) and the left regular representation
  \(\Lambda\)    on \(\FSpace{L}{}(G,U)\):
  \begin{displaymath}
    \oper{W} \uir{}{}(g) = \Lambda(g) \oper{W}.
  \end{displaymath}
  Here \(\Lambda\) is defined as usual by:
  \begin{equation}\label{eq:left-reg-repr}
    \Lambda(g): f(h) \mapsto f(g^{-1}h).
  \end{equation}
\end{thm}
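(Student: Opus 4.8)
The plan is to prove the operator identity \(\oper{W} \uir{}{}(g) = \Lambda(g) \oper{W}\) by the most direct route: evaluate both sides on an arbitrary vector \(v\in V\), producing two elements of \(\FSpace{L}{}(G,U)\), and then compare these \(U\)-valued functions pointwise at an arbitrary \(h\in G\). Since two functions on \(G\) coincide exactly when their values agree at every point, this pointwise check is enough to establish equality of the operators, and no approximation or density argument is needed.

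First I would expand the left-hand side. Applying \(\uir{}{}(g)\) to \(v\) and inserting the result into the defining formula \eqref{eq:coheret-transf-gen} gives, at the point \(h\),
\[
  [\oper{W}(\uir{}{}(g)v)](h) = F(\uir{}{}(h^{-1})\uir{}{}(g)v) = F(\uir{}{}(h^{-1}g)v),
\]
where the second equality is simply the homomorphism property of the representation. Next I would expand the right-hand side: the function \(\oper{W}v\) is \(h\mapsto F(\uir{}{}(h^{-1})v)\), and acting on it by \(\Lambda(g)\) according to \eqref{eq:left-reg-repr} shifts the argument, so that
\[
  [\Lambda(g)\oper{W}v](h) = (\oper{W}v)(g^{-1}h) = F(\uir{}{}((g^{-1}h)^{-1})v) = F(\uir{}{}(h^{-1}g)v),
\]
using \((g^{-1}h)^{-1}=h^{-1}g\). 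The two expressions coincide for every \(h\), which proves the claim.

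I do not expect any genuine obstacle here; the only care required is correct bookkeeping of the inverses so that the group products match on both sides, and the reason the \emph{left} regular representation (with a \(g^{-1}\)) appears is precisely that the transform itself is built from \(\uir{}{}(g^{-1})\). It is worth emphasising the structural reason the argument is so clean: the fiducial operator \(F\) enters only as the outermost, final step on both sides and is never commuted past \(\uir{}{}\) or \(\Lambda\). Consequently the sole facts invoked are that \(\uir{}{}\) is a group homomorphism and the definition of \(\Lambda\); no linearity, boundedness, or homogeneity of \(F\) is used. This is exactly why the intertwining property survives the relaxation, noted in the preceding remarks, that \(F\) need not be linear.
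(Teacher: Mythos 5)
Your proposal is correct and follows essentially the same route as the paper: both reduce to the pointwise identity $F(\uir{}{}(h^{-1})\uir{}{}(g)v)=F(\uir{}{}((g^{-1}h)^{-1})v)$ using only the homomorphism property of $\uir{}{}$ and the definition of $\Lambda$, with $F$ applied as the outermost step. The only cosmetic difference is that you expand both sides and meet in the middle, while the paper rewrites the left-hand side directly into the right-hand side.
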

\begin{proof}
  We have a calculation similar to wavelet
  transform~\cite{Kisil98a}*{Prop.~2.6}. Take \(u=\uir{}{}(g) v\) and
  calculate its covariant transform:
   \begin{eqnarray*}{}
     [\oper{W}( \uir{}{}(g) v)] (h) & = &  [\oper{W}(\uir{}{}(g) v)] (h)=F(\uir{}{}(h^{-1}) \uir{}{}(g) v ) \\
     & = & F(\uir{}{}((g^{-1}h)^{-1}) v) \\
     & = & [\oper{W}v] (g^{-1}h)\\
     & = & \Lambda(g) [\oper{W}v] (h).
   \end{eqnarray*}
\end{proof}
The next result follows immediately:
\begin{cor}\label{co:pi}
  The image space \(\oper{W}(V)\) is invariant under the
  left shifts on \(G\).
\end{cor}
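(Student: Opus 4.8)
The plan is to read off the invariance directly from the intertwining relation established in Theorem~\ref{pr:inter1}, so no genuinely new work is required. First I would fix an arbitrary element \(f\) of the image space \(\oper{W}(V)\), so that \(f=\oper{W}v\) for some \(v\in V\), together with an arbitrary \(g\in G\); the goal is to exhibit the left shift \(\Lambda(g)f\) as the covariant transform of some vector of \(V\). Applying the identity \(\oper{W}\uir{}{}(g)=\Lambda(g)\oper{W}\) of Theorem~\ref{pr:inter1} to this particular \(v\) yields
\[
  \Lambda(g)f=\Lambda(g)\oper{W}v=\oper{W}(\uir{}{}(g)v).
\]
Since \(\uir{}{}(g)v\) is again a vector of \(V\), the right-hand side lies in \(\oper{W}(V)\) by the very definition of the image, which is exactly the asserted invariance under the left shift \(\Lambda(g)\).

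There is essentially no obstacle to surmount here: the entire content is carried by the intertwining theorem, and the corollary is merely the observation that reading that relation as \(\Lambda(g)\oper{W}=\oper{W}\uir{}{}(g)\) forces the range of \(\oper{W}\) to absorb every left shift. The only points meriting a word of care are that \(g\) ranges over all of \(G\), so the invariance holds for every shift and not merely for a generating set, and that one does not need \(\uir{}{}(g)\) to be surjective---it suffices that \(\uir{}{}(g)v\in V\), which is automatic. If the stronger statement that each \(\Lambda(g)\) restricts to a \emph{bijection} of \(\oper{W}(V)\) were wanted, I would additionally invoke the fact that \(\uir{}{}\) is a representation, so \(\uir{}{}(g)\) is invertible with inverse \(\uir{}{}(g^{-1})\); feeding this through the same computation exhibits \(\Lambda(g^{-1})\) as the inverse shift on the image, completing the bijectivity.
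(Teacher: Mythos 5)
Your argument is correct and is precisely the one the paper intends: the corollary is stated as following immediately from Theorem~\ref{pr:inter1}, and your computation \(\Lambda(g)\oper{W}v=\oper{W}(\uir{}{}(g)v)\in\oper{W}(V)\) is exactly that immediate deduction. The additional remark on bijectivity via \(\uir{}{}(g^{-1})\) is a harmless bonus not required for the statement.
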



\section{Examples of Covariant Transform}
\label{sec:exampl-covar-transf}
We start from the classical example of the group-theoretical wavelet transform:
\begin{example}
  \label{ex:wavelet}
  Let \(V\) be a Hilbert space with an inner product
  \(\scalar{\cdot}{\cdot}\) and \(\uir{}{}\) be a unitary
  representation of a group \(G\) in the space \(V\). Let \(F: V
  \rightarrow \Space{C}{}\) be a functional \(v\mapsto
  \scalar{v}{v_0}\) defined by a vector \(v_0\in V\). The vector
  \(v_0\) is oftenly called the \emph{mother wavelet} in areas related
  to signal processing or the \emph{vacuum state} in quantum
  framework.

  Then the transformation~\eqref{eq:coheret-transf-gen} is the
  well-known expression for a \emph{wavelet
    transform}~\cite[(7.48)]{AliAntGaz00} (or \emph{representation
    coefficients}):
  \begin{equation}
    \label{eq:wavelet-transf}
    \oper{W}: v\mapsto \hat{v}(g) = \scalar{\uir{}{}(g^{-1})v}{v_0}  =
    \scalar{ v}{\uir{}{}(g)v_0}, \qquad
    v\in V,\ g\in G.
  \end{equation}
  The family of vectors \(v_g=\uir{}{}(g)v_0\) is called
  \emph{wavelets} or \emph{coherent states}. In this case we obtain
  scalar valued functions on \(G\), thus the fundamental r\^ole of
  this example is explained in Rem.~\ref{re:range-dim}.

  This scheme is typically carried out for a square integrable
  representation \(\uir{}{}\) and \(v_0\) being an admissible
  vector~\cites{Perelomov86,FeichGroech89a,AliAntGaz00,Fuhr05a,ChristensenOlafsson09a}.
  In this case the wavelet (covariant) transform is a map into the
  square integrable functions~\cite{DufloMoore} with respect to the
  left Haar measure. The map becomes an isometry if \(v_0\) is properly
  scaled.
\end{example}
However square integrable representations and admissible vectors does not cover all
interesting cases.
\begin{example}
  \label{ex:ax+b}
  Let \(G=\mathrm{Aff}\) be the ``\(ax+b\)'' (or \emph{affine})
  group~\cite[\S~8.2]{AliAntGaz00}: the set of points \((a,b)\),
  \(a\in \Space[+]{R}{}\), \(b\in \Space{R}{}\) in the upper
  half-plane with the group law:
   \begin{equation}
     (a, b) * (a', b') = (aa', ab'+b)
   \end{equation}
   and left invariant measure \(a^{-2}\,da\,db\).  Its isometric
   representation on \(V=\FSpace{L}{p}(\Space{R}{})\) is given by the
   formula:
  \begin{equation}\label{eq:ax+b-repr-1}
     [\uir{}{p}(g)\, f](x)= a^{\frac{1}{p}}f\left(ax+b\right),
     \qquad\text{where } g^{-1}=(a,b).
  \end{equation}
  We consider the operators \(F_{\pm}:\FSpace{L}{2}(\Space{R}{})
  \rightarrow \Space{C}{}\) defined by:
  \begin{equation}
    \label{eq:cauchy}
    F_{\pm}(f)=\frac{1}{2\pi i}\int_{\Space{R}{}} \frac{f(t)\,dt}{x\mp \rmi}.
  \end{equation}
  Then the covariant transform~\eqref{eq:coheret-transf-gen} is the
  Cauchy integral from \(\FSpace{L}{p}(\Space{R}{})\) to the space of
  functions \(\hat{f}(a,b)\) such that
  \(a^{-\frac{1}{p}}\hat{f}(a,b)\) is in the Hardy space in the
  upper/lower half-plane \(\FSpace{H}{p}(\Space[\pm]{R}{2})\).
  Although the representation~\eqref{eq:ax+b-repr-1} is square
  integrable for \(p=2\), the function \(\frac{1}{x\pm \rmi}\) used
  in~\eqref{eq:cauchy} is not an admissible vacuum vector.  Thus the
  complex analysis become decoupled from the traditional wavelet
  theory. As a result the application of wavelet theory shall relay on
  an extraneous mother wavelets~\cite{Hutnik09a}.

  Many important objects in complex analysis are generated by
  inadmissible mother wavelets like~\eqref{eq:cauchy}. For example, if
  \(F:\FSpace{L}{2}(\Space{R}{}) \rightarrow \Space{C}{}\) is defined
  by \(F: f \mapsto F_+ f + F_-f\) then the covariant
  transform~\eqref{eq:coheret-transf-gen} reduces to the \emph{Poisson
    integral}.  If \(F:\FSpace{L}{2}(\Space{R}{}) \rightarrow
  \Space{C}{2}\) is defined by \(F: f \mapsto( F_+ f, F_-f)\) then the
  covariant transform~\eqref{eq:coheret-transf-gen} represents a
  function \(f\) on the real line as a jump:
  \begin{equation}
    \label{eq:jump-rl}
    f(z)=f_+(z)-f_-(z),\qquad f_\pm(z)\in \FSpace{H}{p}(\Space[\pm]{R}{2})
  \end{equation}
  between functions analytic in the upper and the lower half-planes.
  This makes a decomposition of \(\FSpace{L}{2}(\Space{R}{})\) into
  irreducible components of the representation~\eqref{eq:ax+b-repr-1}.
  Another interesting but non-admissible vector is the Gaussian
  \(e^{-x^2}\).
\end{example}
\begin{example}
  \label{ex:sl2}
  For the group \(G=\SL\)~\cite{Lang85} let us consider the unitary
  representation \(\uir{}{}\) on the space of square integrable function
  \(\FSpace{L}{2}(\Space[+]{R}{2})\) on the upper half-plane through
  the M\"obius transformations:
   \begin{equation}
     \label{eq:sl2-action}
     \uir{}{}(g): f(z) \mapsto \frac{1}{(c z + d)^2}\,
     f\left(\frac{a z+ b }{c z +d}\right), \qquad g^{-1}=\
     \begin{pmatrix}
       a & b \\ c & d 
     \end{pmatrix}.
   \end{equation}
   This is a representation from the discrete series and
   \(\FSpace{L}{2}(\Space{D}{})\) and irreducible invariant subspaces
   are parametrised by integers.  Let \(F_k\) be the functional
   \(\FSpace{L}{2}(\Space[+]{R}{2})\rightarrow \Space{C}{}\) of
   pairing with the lowest/highest \(k\)-weight vector in the
   corresponding irreducible component
   \(\FSpace{B}{k}(\Space[\pm]{R}{2})\), \(k\geq 2\) of the discrete
   series~\cite[Ch.~VI]{Lang85}. Then we can build an operator \(F\)
   from various \(F_k\) similarly to the previous Example. In
   particular, the jump representation~\eqref{eq:jump-rl} on the real
   line generalises to the representation of a square integrable
   function \(f\) on the upper half-plane as a sum
   \begin{displaymath}
     f(z)=\sum_k a_k f_k(z), \qquad f_k\in\FSpace{B}{n}(\Space[\pm]{R}{2})
   \end{displaymath}
   for prescribed coefficients \(a_k\) and analytic functions \(f_k\) in
   question from different irreducible subspaces.

   Covariant transform is also meaningful for principal and
   complementary series of representations of the group
   \(\SL\), which are not square integrable~\cite{Kisil97c}.
\end{example}
\begin{example}
  \label{it:direct-funct}
  A straightforward generalisation of Ex.~\ref{ex:wavelet} is
  obtained if \(V\) is a Banach space and \(F: V \rightarrow
  \Space{C}{}\) is an element of \(V^*\). Then the
  covariant transform coincides with the construction of wavelets in
  Banach spaces~\cite{Kisil98a}.
\end{example}
\begin{example}
  The next stage of generalisation is achieved if \(V\) is a
  Banach space and \(F: V \rightarrow \Space{C}{n}\) is a linear
  operator. Then the corresponding covariant transform is a map
  \(\oper{W}: V \rightarrow \FSpace{L}{}(G,\Space{C}{n})\). This is
  closely related to M.G.~Krein's works on \emph{directing
    functionals}~\cite{Krein48a}, see also \emph{multiresolution
    wavelet analysis}~\cite{BratJorg97a}, Clifford-valued
  Bargmann spaces~\cite{CnopsKisil97a} and~\cite[Thm.~7.3.1]{AliAntGaz00}.
\end{example}
\begin{example}
  Let \(F\) be a projector \(\FSpace{L}{p}(\Space{R}{})\rightarrow
  \FSpace{L}{p}(\Space{R}{})\) defined by the relation \((Ff)\hat{\
  }(\lambda )=\chi(\lambda)\hat{f}(\lambda)\), where the hat denotes the Fourier
  transform and \(\chi(\lambda)\) is the characteristic function of
  the set \([-2,-1]\cup[1,2]\). Then the covariant transform
  \(\FSpace{L}{p}(\Space{R}{})\rightarrow \FSpace{C}{}(\mathrm{Aff},
  \FSpace{L}{p}(\Space{R}{}))\) generated by the
  representation~\eqref{eq:ax+b-repr-1} of the affine group from
  \(F\) contains all information provided by the \emph{Littlewood--Paley
  operator}~\cite{Grafakos08}*{\S~5.1.1}.
\end{example}
\begin{example}
  \label{ex:maximal-function}
  A step in a different direction is a consideration of
  non-linear operators. Take again the ``\(ax+b\)'' group and its
  representation~\eqref{eq:ax+b-repr-1}. 
  We define \(F\) to be a homogeneous but non-linear functional
  \(V\rightarrow \Space[+]{R}{}\):
  \begin{displaymath}
    F (f) = \frac{1}{2}\int\limits_{-1}^1 \modulus{f(x)}\,dx.
  \end{displaymath}
  The covariant transform~\eqref{eq:coheret-transf-gen} becomes:
  \begin{equation}
    \label{eq:hardi-max}
    [\oper{W}_p f](a,b) =  F(\uir{}{p}(a,b) f) 
    = \frac{1}{2}\int\limits_{-1}^1
    \modulus{a^{\frac{1}{p}}f\left(ax+b\right)}\,dx
    = a^{\frac{1}{p}}\frac{1}{2a}\int\limits^{b+a}_{b-a}
    \modulus{f\left(x\right)}\,dx.
  \end{equation}
  Obviously \(M_f(b)=\max_{a}[\oper{W}_{\infty}f](a,b)\) coincides
  with the Hardy \emph{maximal function}, which contains important
  information on the original function \(f\). 
  From the Cor.~\ref{co:pi} we deduce that the operator \(M: f\mapsto
  M_f\) intertwines \(\uir{}{p}\) with itself \(\uir{}{p}M=M
  \uir{}{p}\).

  Of course, the full covariant transform~\eqref{eq:hardi-max} is even more
  detailed than \(M\). For example,
  \(\norm{f}=\max_b[\oper{W}_{\infty}f](\frac{1}{2},b)\) is the shift
  invariant norm~\cite{Johansson08a}.
\end{example}
\begin{example}
  Let \(V=\FSpace{L}{c}(\Space{R}{2})\) be the space of
  compactly supported bounded functions on the plane. We take \(F\)
  be the linear operator \(V\rightarrow \Space{C}{}\) of integration
  over the real line:
  \begin{displaymath}
    F: f(x,y)\mapsto F(f)=\int_{\Space{R}{}}f(x,0)\,dx.
  \end{displaymath}
  Let \(G\) be the group of Euclidean motions of the plane
  represented by \(\uir{}{}\) on \(V\) by a change of variables. Then
  the wavelet transform \(F(\uir{}{}(g)f)\) is the \emph{Radon
    transform}.
\end{example}
\begin{example}
  Let a representation \(\uir{}{}\) of a group \(G\) act on a
  space \(X\). Then there is an associated representation
  \(\uir{}{B}\) of \(G\) on a space \(V=\FSpace{B}{}(X,Y)\) of
  linear operators \(X\rightarrow Y\) defined by the identity:
  \begin{equation}
    \label{eq:oper-repres}
    (\uir{}{B}(g) A)x=A(\uir{}{}(g^{-1})x), \qquad x\in X,\ g\in G,\ A
    \in \FSpace{B}{}(X,Y). 
  \end{equation}
  Following the Remark~\ref{re:range-dim} we take \(F\) to be a
  functional \(V\rightarrow\Space{C}{}\), for example \(F\) can be
  defined from a pair \(x\in X\), \(l\in Y^*\)  by the expression
  \(F: A\mapsto \scalar{Ax}{l}\). Then the covariant
  transform is:
  \begin{displaymath}
    \oper{W}: A \mapsto \hat{A}(g)=F(\uir{}{B}(g) A).
  \end{displaymath}
  This is an example of \emph{covariant calculus}~\cite{Kisil98a,Kisil02a}.
\end{example}
\begin{example}
  A modification of the previous construction is obtained if we
  have two groups \(G_1\) and \(G_2\) represented by \(\uir{}{1}\)
  and \(\uir{}{2}\) on \(X\) and \(Y^*\) respectively. Then we have a covariant
  transform \(\FSpace{B}{}(X,Y)\rightarrow \FSpace{L}{}(G_1\times
  G_2, \Space{C}{})\) defined by the formula:
  \begin{displaymath}
    \oper{W}: A \mapsto \hat{A}(g_1,g_2)=\scalar{A\uir{}{1}(g_1)x}{\uir{}{2}(g_2)l}.
  \end{displaymath}
  This generalises \emph{Berezin functional calculi}~\cite{Kisil98a}.
\end{example}
\begin{example}
  Let us restrict the previous example to the case when \(X=Y\) is a
  Hilbert space, \(\uir{}{1}{}=\uir{}{2}{}=\uir{}{}\) and \(x=l\) with
  \(\norm{x}=1\). Than the range of the covariant transform:
  \begin{displaymath}
    \oper{W}: A \mapsto \hat{A}(g)=\scalar{A\uir{}{}(g)x}{\uir{}{}(g)x}
  \end{displaymath}
  is a subset of the \emph{numerical range} of the operator \(A\). As
  a function on a group \(\hat{A}(g)\) provides a better description of
  \(A\) than the set of its values---numerical range. 
\end{example}
\begin{example}
  The group \(SU(1,1)\simeq \SL\) consists of \(2\times 2\) matrices
  of the form \(\begin{pmatrix}
    \alpha&\beta\\\bar{\beta}&\bar{\alpha}
  \end{pmatrix}\) with the unit
  determinant~\cite[\S~IX.1]{Lang85}. Let \(T\) be an operator
  with the spectral radius less than \(1\). Then the associated
  M\"obius transformation
  \begin{equation}
    \label{eq:meobius-T}
    g: T \mapsto g\cdot T =  \frac{\alpha T+\beta
      I}{\bar{\beta}T+\bar{\alpha}I}, \qquad \text{where} \quad
    g=
    \begin{pmatrix}
      \alpha&\beta\\\bar{\beta}&\bar{\alpha}
    \end{pmatrix}\in \SL,\ 
  \end{equation}
  produces a well-defined operator with the spectral radius less than
  \(1\) as well.  Thus we have a representation of \(SU(1,1)\). A choice
  of an operator \(F\) will define the corresponding covariant
  transform. In this way we obtain generalisations of
  \emph{Riesz--Dunford functional calculus}~\cite{Kisil02a}.
\end{example}

\begin{example}
  Consider again the action~\eqref{eq:meobius-T} of the Moebius
  transformations on operators from the previous Example. Let us
  introduce the defect operators \(D_T=(I-T^*T)^{1/2}\) and
  \(D_{T^*}=(I-TT^*)^{1/2}\). For the case \(F=D_{T^*}\) the covariant
  transform is, cf.~\cite{NagyFoias70}*{\S~VI.1, (1.2)}:
  \begin{displaymath}{}
    [\oper{W} T](g)=F(g\cdot T)=-e^{\rmi\phi}\,\Theta_T(z)\, D_T, \qquad
    \text{for } 
    g
    =     \begin{pmatrix}
      e^{\rmi\phi/2}&0\\0&e^{-\rmi\phi/2}
    \end{pmatrix}
    \begin{pmatrix}
      1&-z\\-\bar{z}&1
    \end{pmatrix},
  \end{displaymath}
  where 
  the  \emph{characteristic function}
  \(\Theta_T(z)\)~\cite{NagyFoias70}*{\S~VI.1, (1.1)} is:
  \begin{displaymath}
    \Theta_T(z) = -T+D_{T^*}\,(I-zT^*)^{-1}\,z\,D_T.
  \end{displaymath}
  Thus we approached the \emph{functional model} of operators from the
  covariant transform. In accordance with Remark~\ref{re:range-dim}
  the model is most fruitful for the case of operator
  \(F=D_{T^*}\) being one-dimensional.
\end{example}

\section{Induced Covariant Transform}
\label{sec:cauchy-transform}
The choice of fiducial operator \(F\) can significantly influence the
behaviour of the covariant transform.  Let \(G\) be a group and
\({H}\) be its closed subgroup with the corresponding homogeneous
space \(X=G/{H}\). Let \(\uir{}{}\) be a representation of \(G\) by
operators on a space \(V\), we denote by \(\uir{}{H}\) the restriction
of \(\uir{}{}\) to the subgroup \(H\). 
\begin{defn}
  Let \(\chi\) be a representation of the subgroup \({H}\) in a space \(U\) and
  \(F: V\rightarrow U\) be an intertwining operator between \(\chi\)
  and the representation \(\uir{}{H}\):
  \begin{displaymath}
   F(\uir{}{}(h) v)=F(v)\chi(h), \qquad \text{ for all }h\in {H},\
   v\in V.
  \end{displaymath}
  Then the covariant transform~\eqref{eq:coheret-transf-gen} generated
  by \(F\) is called the \emph{induced covariant transform}.
\end{defn}
The following is the main motivating example.
\begin{example}
  Consider the traditional wavelet transform as outlined in
  Ex.~\ref{ex:wavelet}. Chose a vacuum vector \(v_0\) to be a joint
  eigenvector for all operators \(\uir{}{}(h)\), \(h\in H\), that is
  \(\uir{}{}(h) v_0=\chi(h) v_0\), where \(\chi(h)\) is a complex
  number depending of \(h\). Then \(\chi\) is obviously a character of
  \(H\). 

  The image of wavelet transform~\eqref{eq:wavelet-transf} with such a
  mother wavelet will have a property:
  \begin{displaymath}
    \hat{v}(gh) = \scalar{ v}{\uir{}{}(gh)v_0} 
    = \scalar{v}{\uir{}{}(g)\chi(h)v_0}
    =\chi(h)\hat{v}(g).
  \end{displaymath}
  Thus the wavelet transform is uniquely defined by cosets on the
  homogeneous space \(G/H\).  In this case we can speak about the
  \emph{reduced wavelet transform}~\cite{Kisil97a}. 
   A representation \(\uir{}{0}\) is \emph{square integrable} \(\mod H\)
   if the induced wavelet transform \([\oper{W}f_0](w)\) of the vacuum
   vector \(f_0(x)\) is square integrable on \(X\).
\end{example}
The image of induced covariant transform have the similar property:
\begin{equation}
  \label{eq:induced-covariant}
  \hat{v}(gh)=F(\uir{}{}((gh)^{-1})
  v)=F(\uir{}{}(h^{-1})\uir{}{}(g^{-1}) v)
  =F(\uir{}{}(g^{-1}) v)\chi{}{}(h^{-1}).
\end{equation}
Thus it is enough to know the value of the covariant transform only at a
single element in every coset \(G/H\) in order to reconstruct it for
the entire group \(G\) by the representation \(\chi\). Since coherent
states (wavelets) are now parametrised by points
homogeneous space \(G/H\) they are referred sometimes as coherent
states which are not connected to a group~\cite{Klauder96a}, however
this is true only in a very narrow sense as explained above.
\begin{example}
  To make it more specific we can consider the representation of \(\SL\)
  defined on \(\FSpace{L}{2}(\Space{R}{})\) by the formula,
  cf.~\eqref{eq:sl2-action}:
  \begin{displaymath}
    \uir{}{}(g): f(z) \mapsto \frac{1}{(c x + d)}\,
    f\left(\frac{a x+ b }{c x +d}\right), \qquad g^{-1}=\
    \begin{pmatrix}
      a & b \\ c & d 
    \end{pmatrix}.
  \end{displaymath}
  Let \(K\subset\SL\) be the compact subgroup 
  of matrices \(
  h_t=
  \begin{pmatrix}
    \cos t&\sin t\\-\sin t&\cos t
  \end{pmatrix}\). Then for the fiducial operator
  \(F_{\pm}\)~\eqref{eq:cauchy} we have
  \(F_{\pm}\circ\uir{}{}(h_t)=e^{\mp\rmi t}F_{\pm}\). Thus we can
  consider the covariant transform only for points in \(\SL/K\),
  however this set can be naturally identified with the \(ax+b\)
  group. Thus we do not obtain any advantage of extending the group in
  Ex.~\ref{ex:ax+b} from \(ax+b\) to \(\SL\) if we will be still using the
  fiducial operator \(F_\pm\)~\eqref{eq:cauchy}.
\end{example}
Functions on the group \(G\), which have the property
\(\hat{v}(gh)=\hat{v}(g)\chi(h)\)~\eqref{eq:induced-covariant},
provide a space for the representation of \(G\) induced by the
representation \(\chi\) of the subgroup \(H\). This explains the
choice of the name for induced covariant transform.

\begin{rem}
  Induced covariant transform uses the fiducial operator \(F\) which
  passes through the action of the subgroup \({H}\). This reduces
  information which we obtained from this transform in some cases.
\end{rem}

There is also a simple connection between a covariant transform and
right shifts: 
\begin{prop}
  Let \(G\) be a Lie group and \(\uir{}{}\) be a representation of
  \(G\) in a space \(V\). Let \([\oper{W}f](g)=F(\uir{}{}(g^{-1})f)\) be a
  covariant transform defined by the fiducial operator \(F: V \rightarrow U\).
  Then the right shift \([\oper{W}f](gg')\) by \(g'\) is the covariant transform
  \([\oper{W'}f](g)=F'(\uir{}{}(g^{-1})f)]\) defined by the fiducial operator
  \(F'=F\circ\uir{}{}(g^{-1})\). 

  In other words the covariant transform intertwines right shifts with
  the associated action \(\uir{}{B}\)~\eqref{eq:oper-repres} on
  fiducial operators.
\end{prop}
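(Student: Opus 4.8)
The plan is to unfold the definition of the right-shifted transform and then use nothing beyond the homomorphism property of \(\uir{}{}\). First I would write out \([\oper{W}f](gg')\) directly from the defining formula \([\oper{W}f](g)=F(\uir{}{}(g^{-1})f)\), obtaining \([\oper{W}f](gg')=F(\uir{}{}((gg')^{-1})f)\). The single algebraic fact I need is the standard inversion rule \((gg')^{-1}=g'^{-1}g^{-1}\) together with \(\uir{}{}\) being a representation, so that \(\uir{}{}(g'^{-1}g^{-1})=\uir{}{}(g'^{-1})\,\uir{}{}(g^{-1})\).

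Substituting this gives \([\oper{W}f](gg')=F\bigl(\uir{}{}(g'^{-1})\,\uir{}{}(g^{-1})f\bigr)\). Now I would read off the composite \(F\circ\uir{}{}(g'^{-1})\) as a new map \(V\to U\), call it \(F'\); crucially \(F'\) does not involve the running variable \(g\), only the fixed shift \(g'\). With this reading the right-hand side is exactly \(F'(\uir{}{}(g^{-1})f)=[\oper{W}'f](g)\), the covariant transform generated by \(F'=F\circ\uir{}{}(g'^{-1})\). This establishes the first assertion, modulo the evident primed/unprimed bookkeeping: the extracted factor is \(\uir{}{}(g'^{-1})\), since it is the fixed element \(g'\) that is absorbed into the fiducial operator.

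For the reformulation I would simply compare \(F'\) with the associated action \(\uir{}{B}\) of \eqref{eq:oper-repres}. By definition \((\uir{}{B}(g')F)v=F(\uir{}{}(g'^{-1})v)\), which is precisely \(F'(v)\); hence \(F'=\uir{}{B}(g')F\). Writing \(R(g')\) for the right shift \([R(g')\phi](g)=\phi(gg')\), the computation above reads \(R(g')\,\oper{W}_F=\oper{W}_{\uir{}{B}(g')F}\), i.e.\ the covariant transform intertwines right shifts on \(\FSpace{L}{}(G,U)\) with the representation \(\uir{}{B}\) on fiducial operators, exactly as claimed.

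There is no serious obstacle here: the statement is essentially the mirror image of Theorem~\ref{pr:inter1}, with the intertwining now on the right and the group action moved onto \(F\) rather than onto \(V\). The only points demanding care are notational, namely keeping the fixed \(g'\) distinct from the variable \(g\), and noting that the Lie-group hypothesis is not actually used, since the argument rests purely on the homomorphism property and works for any group. I would also remark that \(F\) need not be linear for the displayed identity \(F'=F\circ\uir{}{}(g'^{-1})\) to hold; the literal identification \(F'=\uir{}{B}(g')F\) with \eqref{eq:oper-repres} is most natural when \(F\) is taken linear, matching the operator-space setting in which \(\uir{}{B}\) was introduced.
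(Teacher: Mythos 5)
Your proof is correct and is precisely the direct unfolding the paper intends (the paper omits the argument entirely, remarking that ``the above result is obvious''). You are also right on the one point of substance: the fiducial operator must be \(F'=F\circ\uir{}{}(g'^{-1})\), so the \(g^{-1}\) in the statement is a typo, and your side remarks that neither linearity of \(F\) nor the Lie structure of \(G\) is actually used are both accurate.
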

Although the above result is obvious, its infinitesimal version has
interesting consequences.
\begin{cor}
  \label{co:cuachy-riemann}
  Let \(G\) be a Lie group with a Lie algebra \(\algebra{g}\) and
  \(\uir{}{}\) be a smooth representation of \(G\). We denote by
  \(d\uir{}{B}\) the derived representation of the associated
  representation \(\uir{}{B}\)~\eqref{eq:oper-repres} on fiducial
  operators.

  Let a fiducial operator \(F\) be a null-solution, i.e. \(A F=0\),
  for the operator \(A=\sum_J a_j d\uir{X_j}{B}\), where
  \(X_j\in\algebra{g}\) and \(a_j\) are constants.  Then the wavelet
  transform \([\oper{W} f](g)=F(\uir{}{}(g^{-1})f)\) for any \(f\)
  satisfies:
  \begin{displaymath}
    D F(g)= 0, \qquad \text{where} \quad
    D=\sum_j \bar{a}_j\linv{X_j}.
  \end{displaymath}
  Here \(\linv{X_j}\) are the left invariant fields (Lie derivatives) on
  \(G\) corresponding to \(X_j\).
\end{cor}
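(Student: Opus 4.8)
The plan is to read the statement as the infinitesimal form of the preceding Proposition, in the same way that Corollary~\ref{co:pi} is the global shadow of Theorem~\ref{pr:inter1}. The whole content is an intertwining identity between the left-invariant fields \(\linv{X_j}\) on \(G\) and the derived action \(d\uir{X_j}{B}\) on fiducial operators; once that identity is in hand the corollary is immediate.

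First I would specialise the Proposition to a one-parameter subgroup. Putting \(g'=\exp(tX_j)\) there, its conclusion reads \([\oper{W}f](g\exp(tX_j))=(\uir{}{B}(\exp(tX_j))F)(\uir{}{}(g^{-1})f)\), since the shifted fiducial operator \(F\circ\uir{}{}(\exp(-tX_j))\) is exactly \(\uir{}{B}(\exp(tX_j))F\). Differentiating both sides at \(t=0\) I get, by the very definition of the left-invariant field on the left and of the derived representation on the right, the key identity
\[ (\linv{X_j}[\oper{W}f])(g)=(d\uir{X_j}{B}F)(\uir{}{}(g^{-1})f). \]
This is precisely the assertion of the Proposition---that \(\oper{W}\) intertwines right shifts with the action \(\uir{}{B}\) on fiducial operators---read infinitesimally.

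Next I would take the linear combination. Because each \(d\uir{X_j}{B}\) is a linear operator on fiducial operators and \(F\) enters the identity linearly, summing against the constants collapses the right-hand side to \((AF)(\uir{}{}(g^{-1})f)\), whence the hypothesis \(AF=0\) forces the operator \(D\) to annihilate \(\oper{W}f\) for every \(f\). The complex conjugation of the coefficients in \(D=\sum_j\bar a_j\linv{X_j}\) is the one bookkeeping point: it records the convention relating \(A\), which acts on the fiducial side through the contragredient \(\uir{}{}(g^{-1})\), to \(D\), which acts on \(G\). Concretely, for the model fiducial functional \(F(v)=\scalar{v}{v_0}\) with \(\uir{}{}\) unitary the identity above becomes \((\linv{X_j}[\oper{W}f])(g)=\scalar{\uir{}{}(g^{-1})f}{d\uir{}{}(X_j)v_0}\) after using the skew-adjointness of \(d\uir{}{}(X_j)\) for real \(X_j\); it is this transfer of the generator across the sesquilinear pairing that fixes the conjugation, so that a ``holomorphic'' annihilating combination of ladder operators for the mother wavelet matches the conjugate invariant operator \(D\) on the group.

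The algebra above is routine; the genuine obstacle is analytic. Interchanging \(\frac{d}{dt}\) with \(F\) is legitimate only on smooth vectors and requires \(F\) to be continuous (for a merely homogeneous, non-linear \(F\) one must instead produce a G\^ateaux derivative and argue by a chain rule). I would therefore first prove the key identity on the dense subspace of smooth vectors of \(\uir{}{}\), where \(t\mapsto\uir{}{}(\exp(-tX_j))w\) is differentiable with derivative \(-d\uir{}{}(X_j)w\) and \(d\uir{X_j}{B}F\) is genuinely defined, and only then extend the conclusion \(D\oper{W}f=0\) to general \(f\) by the closability of the invariant fields \(\linv{X_j}\) together with the continuity of \(\oper{W}\). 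Controlling this domain question, rather than the intertwining itself, is where the care is needed.
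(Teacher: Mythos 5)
Your proposal is correct and follows exactly the route the paper intends: the paper offers no written proof, presenting the corollary as the ``infinitesimal version'' of the preceding (``obvious'') Proposition, and your argument---specialise that Proposition to \(g'=\exp(tX_j)\), differentiate at \(t=0\) to get \((\linv{X_j}[\oper{W}f])(g)=(d\uir{X_j}{B}F)(\uir{}{}(g^{-1})f)\), then sum and invoke \(AF=0\)---is precisely that differentiation carried out. Your extra remarks on where the conjugation \(\bar a_j\) enters (via the sesquilinear pairing in the model case \(F=\scalar{\cdot}{v_0}\), consistent with the paper's subsequent \(ax+b\) example) and on the smooth-vector domain issues go beyond what the paper records, but they address real points the paper glosses over rather than departing from its method.
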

\begin{example}
  Consider the representation \(\uir{}{}\)~\eqref{eq:ax+b-repr-1} of the \(ax+b\)
  group with the \(p=1\). Let \(A\) and \(N\) be the basis of the
  corresponding Lie algebra generating one-parameter subgroups
  \((e^t,0)\) and \((0,t)\). Then the derived representations are:
  \begin{displaymath}
    [d\uir{A}{} f](x)= f(x)+xf'(x), \qquad [d\uir{N}{}f](x)=f'(x).
  \end{displaymath}
  The corresponding left invariant vector fields on \(ax+b\) group
  are:
  \begin{displaymath}
   \linv{A} =a \partial_a,\qquad \linv{N}=a\partial_b.
  \end{displaymath}
  The mother wavelet \(\frac{1}{x+\rmi}\) is a null solution of the
  operator \(d\uir{A}{} +\rmi d\uir{N}{}=I+(x+\rmi)\frac{d}{dx}\).
  Therefore the covariant transform with the fiducial operator
  \(F_+\)~\eqref{eq:cauchy} will consist with the null solutions to
  the operator \(\linv{A}-\rmi\linv{N}=-\rmi a(\partial_b+\rmi\partial_a)\),
  that is in the essence the Cauchy-Riemann operator in the upper
  half-plane. 
\end{example}
There is a statement which extends the previous Corollary from
differential operators to integro-differential ones. We will formulate
it for the wavelets setting.
\begin{cor}
  \label{co:cuachy-riemann-integ}
  Let \(G\) be a Lie group with a Lie algebra \(\algebra{g}\) and
  \(\uir{}{}\) be a unitary representation of \(G\), which can be
  extended to a vector space \(V\) of functions or distributions on \(G\).
  
  Let a mother wavelet \(w\in V'\)  satisfy the equation 
  \begin{displaymath}
    \int_{G} a(g)\, \uir{}{}(g) w\,dg=0,
  \end{displaymath}
  for a fixed distribution \(a(g) \in V\). Then  any wavelet transform
  \(F(g)= \oper{W} f(g)=\scalar{f}{\uir{}{}(g)w_0}\) obeys the condition:
  \begin{displaymath}
   DF=0,\qquad \text{where} \quad D=\int_{G} \bar{a}(g)\, R(g) \,dg,
  \end{displaymath}
  with \(R\) being the right regular representation of \(G\).
\end{cor}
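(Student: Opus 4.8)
The plan is to mimic the proof of Corollary~\ref{co:cuachy-riemann}, replacing the finite linear combination of Lie-algebra derivatives by an integral against the distribution \(a(g)\), and to exploit the intertwining property of Theorem~\ref{pr:inter1} together with the right-shift version recorded in the Proposition just above. The starting point is the observation that \(F(g)=\scalar{f}{\uir{}{}(g)w}\) is linear in the mother wavelet slot, so applying the right regular representation \(R(g')\) and integrating against \(\bar a(g')\) should move the integral from acting on the group variable of \(F\) to acting directly on \(w\) inside the pairing.

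First I would write out \(DF\) explicitly: since \([R(g')F](g)=F(gg')=\scalar{f}{\uir{}{}(gg')w}=\scalar{f}{\uir{}{}(g)\uir{}{}(g')w}\), we get
\begin{displaymath}
  [DF](g)=\int_G \bar a(g')\,\scalar{f}{\uir{}{}(g)\uir{}{}(g')w}\,dg'.
\end{displaymath}
Next I would pull the fixed operator \(\uir{}{}(g)\) and the conjugate-linear pairing outside the integral. Because \(\uir{}{}\) is unitary, the inner product is conjugate-linear in the second argument, which is exactly why the coefficient appears as \(\bar a(g')\) on the transform side: the factor \(a(g')\) attached to \(\uir{}{}(g')w\) inside the pairing reemerges as \(\bar a(g')\) once it is carried across the bracket. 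This turns the expression into \(\scalar{f}{\uir{}{}(g)\int_G a(g')\,\uir{}{}(g')w\,dg'}\), and the hypothesis \(\int_G a(g')\,\uir{}{}(g')w\,dg'=0\) finishes the computation, giving \([DF](g)=0\) for every \(g\).

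The main obstacle is analytic rather than algebraic: justifying that \(\uir{}{}(g)\) and the pairing \(\scalar{f}{\cdot}\) may be interchanged with the integral \(\int_G a(g')\,\uir{}{}(g')w\,dg'\) when \(a\) is merely a distribution and \(w\) lies in the dual space \(V'\). In the smooth, compactly supported case this is a routine continuity-of-the-integral argument, but in full generality one must interpret the vector-valued integral weakly and check that \(\uir{}{}(g)\) is continuous on the relevant topology so that it commutes with the weak integral. I would handle this by testing against \(f\): the scalar integrals \(g'\mapsto \bar a(g')\scalar{f}{\uir{}{}(g)\uir{}{}(g')w}\) are well defined by the duality pairing of \(a\in V\) with the \(V'\)-valued map \(g'\mapsto\uir{}{}(g)\uir{}{}(g')w\), so the interchange is legitimate provided that map is in the domain on which \(a\) acts. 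The remaining subtlety is the appearance of \(\bar a\) versus \(a\); I would make the conjugation convention of \(\scalar{\cdot}{\cdot}\) explicit at the outset so that the bar is produced automatically when \(a(g')\) passes through the sesquilinear pairing, matching the statement.
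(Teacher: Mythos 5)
Your computation is correct and is precisely the argument the paper intends: the paper states this corollary without proof (presenting it as the evident integro-differential analogue of Corollary~\ref{co:cuachy-riemann}), and your identity \([DF](g)=\int_G \bar a(g')\,\scalar{f}{\uir{}{}(g)\uir{}{}(g')w}\,dg'=\scalar{f}{\uir{}{}(g)\int_G a(g')\uir{}{}(g')w\,dg'}=0\), with the conjugate \(\bar a\) produced by the sesquilinearity of the pairing, is the natural and correct proof. Your remarks on the weak interpretation of the vector-valued integral appropriately address the only analytic subtlety, at the same level of rigour as the paper itself.
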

Clearly the Corollary~\ref{co:cuachy-riemann} is a particular case of
Corollary~\ref{co:cuachy-riemann-integ}.


\section{Inverse Covariant Transform}
\label{sec:invar-funct-groups}
An object invariant under the left action
\(\Lambda\)~\eqref{eq:left-reg-repr} is called \emph{left invariant}.
For example, 
let \(L\) and \(L'\) be two left invariant spaces of functions on
\(G\).  We say that a pairing \(\scalar{\cdot}{\cdot}: L\times
L' \rightarrow \Space{C}{}\) is \emph{left invariant} if
\begin{equation}
  \scalar{\Lambda(g)f}{\Lambda(g) f'}= \scalar{f}{f'}, \quad \textrm{ for all }
  \quad f\in L,\  f'\in L'.
\end{equation}
\begin{rem}
  \begin{enumerate}
  \item We do not require the pairing to be linear in general.
  \item If the pairing is invariant on space \(L\times L'\) it is not
    necessarily invariant (or even defined) on the whole
    \(\FSpace{C}{}(G)\times \FSpace{C}{}(G)\).
  \item In a more general setting we shall study an invariant pairing
    on a homogeneous spaces instead of the group. However due to length
    constraints we cannot consider it here beyond the Example~\ref{ex:hs-pairing}.
  \item An invariant pairing on \(G\) can be obtained from an invariant
    functional \(l\) by the formula \(\scalar{f_1}{f_2}=l(f_1\bar{f}_2)\).
  \end{enumerate}
\end{rem}

For a representation \(\uir{}{}\) of \(G\) in \(V\) and \(v_0\in V\)
we fix a function \(w(g)=\uir{}{}(g)v_0\). We assume that the pairing
can be extended in its second component to this \(V\)-valued
functions, say, in the weak sense.
\begin{defn}
  \label{de:admissible}
  Let \(\scalar{\cdot}{\cdot}\) be a left invariant pairing on
  \(L\times L'\) as above, let \(\uir{}{}\) be a representation of
  \(G\) in a space \(V\), we define the function
  \(w(g)=\uir{}{}(g)v_0\) for \(v_0\in V\). The \emph{inverse
    covariant transform} \(\oper{M}\) is a map \(L \rightarrow V\)
  defined by the pairing:
  \begin{equation}
    \label{eq:inv-cov-trans}
    \oper{M}: f \mapsto \scalar{f}{w}, \qquad \text{
      where } f\in L. 
  \end{equation}
\end{defn}

 \begin{example}
   Let \(G\) be a group with a unitary square integrable representation \(\rho\).
   An invariant pairing of two square integrable functions is obviously done by the
   integration over the Haar measure:
   \begin{displaymath}
     \scalar{f_1}{f_2}=\int_G f_1(g)\bar{f}_2(g)\,dg.
   \end{displaymath}
   
   For an admissible vector \(v_0\)~\cite{DufloMoore},
   \cite[Chap.~8]{AliAntGaz00} the inverse covariant transform is
   known in this setup as a \emph{reconstruction formula}.
 \end{example}
 \begin{example}
   \label{ex:hs-pairing}
   Let \(\rho\) be a square integrable representation of \(G\) modulo a subgroup
   \(H\subset G\) and let \(X=G/H\) be the corresponding homogeneous
   space with a quasi-invariant measure \(dx\).  Then integration over
   \(dx\) with an appropriate weight produces an invariant pairing.
   The inverse covariant transform is a more general
   version~\cite[(7.52)]{AliAntGaz00} of the \emph{reconstruction
     formula} mentioned in the previous example.
 \end{example}
 

 Let \(\rho\) be not a square integrable representation (even modulo a subgroup) or
 let \(v_0\) be inadmissible vector of a square integrable  representation
 \(\rho\). An invariant pairing in this case is not associated with an
 integration over any non singular invariant measure on \(G\). In this
 case we have a \emph{Hardy pairing}. The following example explains
 the name.
\begin{example}
  Let \(G\) be the ``\(ax+b\)'' group and its representation
  \(\uir{}{}\)~\eqref{eq:ax+b-repr-1} from Ex.~\ref{ex:ax+b}. An
  invariant pairing on \(G\), which is not generated by the Haar
  measure \(a^{-2}da\,db\), is:
  \begin{equation}
    \label{eq:hardy-pairing}
    \scalar{f_1}{f_2}=
    \lim_{a\rightarrow 0}\int\limits_{-\infty}^{\infty}
    f_1(a,b)\,\bar{f}_2(a,b)\,db.
  \end{equation}
  For this pairing we can consider functions \(\frac{1}{2\pi i
    (x+i)}\) or \(e^{-x^2}\), which are not admissible vectors in the
  sense of square integrable representations. Then the inverse covariant transform
  provides an \emph{integral resolutions} of the identity.
\end{example}
Similar pairings can be defined for other semi-direct products of two
groups. We can also extend a Hardy pairing to a group, which has a
subgroup with such a pairing.
\begin{example}
  Let \(G\) be the group \(\SL\) from the Ex.~\ref{ex:sl2}. Then
  the ``\(ax+b\)'' group is a subgroup of \(\SL\), moreover we can
  parametrise \(\SL\) by triples \((a,b,\theta)\),
  \(\theta\in(-\pi,\pi]\) with the respective Haar
  measure~\cite[III.1(3)]{Lang85}. Then the Hardy
  pairing
  \begin{equation}
    \label{eq:hardy-pairing1}
    \scalar{f_1}{f_2}= \lim_{a\rightarrow 0}\int\limits_{-\infty}^{\infty}
    f_1(a,b,\theta)\,\bar{f}_2(a,b,\theta)\,db\,d\theta.
  \end{equation}
  is invariant on \(\SL\) as well.  The corresponding inverse
  covariant transform provides even a finer resolution of the identity
  which is invariant under conformal mappings of the Lobachevsky
  half-plane.
\end{example}

A further study of covariant transform shall be
continued elsewhere.

\textbf{Acknowledgement.} Author is grateful to the anonymous referee
for many helpful suggestions.

\small
\bibliographystyle{plain}
\bibliography{abbrevmr,akisil,analyse,arare,aphysics}
\end{document}